\newtheorem{thm}{Theorem}[section] % number like 3.1, 3.2, 3.3, etc.
\newtheorem{lem}{Theorem}[section]
\theoremstyle{definition} % 'here we change the style'
\theoremstyle{remark} % 'style changed again'
\newtheorem{rem}{Remark}
\newtheorem{cor}{Corollary}
\DeclareMathOperator{\Spec}{\mathrm{Spec}}
 \newcommand{\cF}{\mathcal{F}}
 \newcommand{\cH}{\mathcal{H}}
 \newcommand{\cO}{\mathcal{O}}
 \newcommand{\F}{\mathbb{F}}
\newcommand{\G}{\mathbb{G}}
 \newcommand{\Q}{\mathbb{Q}}
 \newcommand{\R}{\mathbb{R}}
  \newcommand{\Z}{\mathbb{Z}}
  \def\arrow#1#2{\smash{\mathop{\longrightarrow}\limits^{#1}_{#2}}} % math arrow
\begin{document}

\title{Every Binary Self-Dual Code Arises From Hilbert Symbols}
\author{Ted Chinburg}
\email{ted@math.upenn.edu}
\address{Dept of Math\\Univ. of Pennsylvania\\209 S. 33rd Str\\Philadelphia\\PA19104-6395\\USA}
\author{Ying Zhang}
\email{yinzhang@sas.upenn.edu}
\address{Dept of Math\\Univ. of Pennsylvania\\209 S. 33rd Str\\Philadelphia\\PA19104-6395\\USA}
\dedication{\rm Oct 18, 2012}
\classification{ 14G50 (primary), 14F20, 94B05, 11T71 (secondary). }
%see also 11T71 or 94B05
\keywords{binary self-dual codes, $S$-integers, \'etale cohomology}
\thanks{The first author was supported in part by  NSF Grant DMS1100355. The second author was
supported by a Benjamin Franklin fellowship from the University of Pennsylvania.}

% If you know the dates of reception, and acceptation you can put them now;
%  idem the name of the person presenting the Note

\begin{abstract}
In this paper we construct binary self-dual codes using the \'etale cohomology of $\mu_2$ on the spectra of rings of $S$-integers of global fields. We will show that up to equivalence, all self-dual codes of length at least $4$ arise from Hilbert pairings on rings of $S$-integers of $\Q$.
This is an arithmetic counterpart of a result of Kreck and Puppe, who used cobordism
theory to show that
all self-dual codes arise from Poincar\'e duality on real three manifolds.
\end{abstract}

\maketitle

\section{Introduction}
\label{sec:intro}

Recently, M. Kreck and V. Puppe \cite{Kreck2008} gave a topological construction of all self-dual codes using  the cohomology of three-manifolds. A self-dual code is a triple $(W,V,E)$ in which $W$ is a vector space of finite even dimension over $\F_2$, $V$ is a subspace of $W$, $E $ is an ordered basis $\{e_i\}_{i = 1}^{2n}$ of $W$ and $V$ is its own orthogonal complement with respect to the
bilinear form $\langle \ , \ \rangle:W\times W \to \F_2$ defined by 
\[\langle \sum_{i=1}^{2n} a_i e_i, \sum_{i= 1}^{2n}b_i e_i \rangle =\sum_{i=1}^{2n} a_i b_i
\]
This implies that $V$ has dimension $n$.  In the following we will call the pair $(W,E)$ together with the form $\langle \  , \ \rangle$ a Euclidean space over $\F_2$, and $E$ is an orthonormal basis for $\langle \ , \ \rangle$.  Another self-dual code $(W',V',E')$ is defined
to be equivalent
to $(W,V,E)$ if there is a bijection between $E$ and $E'$ which when extended to an $\F_2$-linear isomorphism $W \to W'$
carries $V$ to $V'$.

%In the following, when we speak of a self-dual code, we will always mean that ambient space has a bilinear product structure which has the above form under a chosen orthonormal basis. For simplicity, we will call such vector spaces Euclidean.

The object of this note is to give a construction of self-dual codes which exploits the analogy between three-manifolds and the spectra of rings of $S$-integers of global fields.

Let $K$ be a global field of characteristic different from $2$. If $K$ is a function field, let $X$ be a smooth projective curve  with function field $K$. If $K$ is a number field, let $\cO_K$ be the ring of integers of $K$ and let $X= \Spec \cO_K$.  Suppose $v$ is a place of $K$ and that $\cF $ is a sheaf on the small \'etale site of $\Spec K_v$, where $K_v$ is the completion of $K$ at $v$. We define the reduced \'etale cohomology group $H^r_{et}(K_v,\cF)$ to be the usual \'etale cohomology group unless $v$ is real, in which case we let $H^r_{et}(K_v, \cF):=H^r_T(\Z/2, \cF)$ be the $r^{th}$ Tate cohomology of the $\mathrm{Gal}(\overline{K_v}/K_v) \cong \Z/2$ module associated to $\cF$. When $K$ is a number field, we let
$H^r_c(\Spec \cO_K,\cF)$ be the cohomology group with compact support defined by Milne   \cite[section 2, p165]{Milne2006}.

Let $S$ be a finite non-empty set of places of $K$ which contains all the archimedean places and all places of residue characteristic $2$. Let $U$ be the open complement of $S$ in $X$. We have a long exact sequence
\begin{equation}
\label{eq:longexact}
\cdots H^r_c(U, \cF)\to H^r_{et}(U, \cF)\to \oplus_{v\in S}H^r_{et}(K_v, i_v^*\cF)\arrow{\delta_r}{} H^{r+1}_c(U, \cF) \cdots
\end{equation}
where $i_v\colon\Spec K_v \to X$ is the canonical morphism.
We show the following result in
Section~\ref{sec:etale}.
%\eject

\begin{thm}\label{thm:dual}
The image of the restriction homomorphism 
\[\Phi\colon H^1_{et}(U, \mu_2)\to \oplus_{v\in S} H^1_{et}(K_v, \mu_2)\]
 is its own orthogonal complement with respect to the non-degenerate bilinear product
\begin{equation}
\label{eq:pairing}
\left ( \oplus_{v \in S} H^1_{et}(K_v, \mu_2)\right ) \times \left ( \oplus_{v\in S} H^1_{et}(K_v, \mu_2)\right ) \to \oplus_{v \in S} H^2_{et}(K_v, \mu_2)\arrow{\delta_2}{} H^3_c(U, \mu_2) \cong \F_2
\end{equation}
which is the composition of the natural cup product pairing with the boundary map of (\ref{eq:longexact}) for $r = 2$.
\end{thm}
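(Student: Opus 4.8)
The plan is to deduce the statement from the arithmetic duality theorems: Artin--Verdier duality for $U$, local Tate duality at the places of $S$, and the compatibility of these dualities with the localization sequence (\ref{eq:longexact}) and with cup products. Throughout I write $\Phi_r\colon H^r_{et}(U,\mu_2)\to P^r:=\oplus_{v\in S}H^r_{et}(K_v,\mu_2)$ for the restriction maps, so that $\Phi=\Phi_1$, and $\delta_r\colon P^r\to H^{r+1}_c(U,\mu_2)$ for the boundary maps of (\ref{eq:longexact}); by exactness $\mathrm{im}(\Phi_r)=\ker(\delta_r)$. Since $\mu_2\cong\Z/2$, the cup product induces a self-duality $\mu_2\otimes\mu_2\cong\mu_2$, so both local Tate duality, giving perfect pairings $H^r_{et}(K_v,\mu_2)\times H^{2-r}_{et}(K_v,\mu_2)\to H^2_{et}(K_v,\mu_2)\cong\F_2$, and Artin--Verdier duality, giving perfect pairings $H^r_c(U,\mu_2)\times H^{3-r}_{et}(U,\mu_2)\to H^3_c(U,\mu_2)\cong\F_2$, apply with $\mu_2$-coefficients on both sides. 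The convention of using Tate cohomology at the real places is exactly what keeps the local pairings perfect and places the archimedean contributions in the correct degrees. A pleasant simplification is that every group in sight is an $\F_2$-vector space, so all signs are immaterial.

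First I would record non-degeneracy of (\ref{eq:pairing}). Under the identification $H^2_{et}(K_v,\mu_2)\cong\F_2$, the map $\delta_2\colon P^2\to H^3_c(U,\mu_2)\cong\F_2$ is the sum $\sum_{v\in S}\mathrm{inv}_v$ of local invariants; this is how Milne's trace isomorphism $H^3_c(U,\G_m)\cong\Q/\Z$ is normalized \cite{Milne2006}, restricted to $2$-torsion. Hence (\ref{eq:pairing}) is the sum of the local cup-product pairings, each perfect by local Tate duality, and so it is non-degenerate: given $0\neq\alpha\in P^1$ with $\alpha_v\neq0$, one chooses $\beta_v$ with $\mathrm{inv}_v(\alpha_v\cup\beta_v)\neq0$ supported at $v$.

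The heart of the matter is that $\mathrm{im}(\Phi_1)$ equals its orthogonal complement, and for this I would use the self-duality of (\ref{eq:longexact}). The projection formula for cup products and connecting homomorphisms gives, for $a\in H^1_{et}(U,\mu_2)$ and $y\in P^1$,
\[
\langle\Phi_1(a),y\rangle=\delta_2\bigl(\Phi_1(a)\cup y\bigr)=a\cup\delta_1(y)\in H^3_c(U,\mu_2)\cong\F_2,
\]
where the middle equality uses compatibility of restriction with cup products and the right-hand side is the value of the Artin--Verdier pairing of $a\in H^1_{et}(U,\mu_2)$ against $\delta_1(y)\in H^2_c(U,\mu_2)$. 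Consequently $y$ is orthogonal to $\mathrm{im}(\Phi_1)$ if and only if $a\cup\delta_1(y)=0$ for every $a$, which by perfectness of the Artin--Verdier pairing holds if and only if $\delta_1(y)=0$, i.e. if and only if $y\in\ker(\delta_1)=\mathrm{im}(\Phi_1)$. This yields $\mathrm{im}(\Phi_1)^\perp=\mathrm{im}(\Phi_1)$ in one stroke, including the isotropy $\mathrm{im}(\Phi_1)\subseteq\mathrm{im}(\Phi_1)^\perp$; I note that isotropy alone is elementary, since $\Phi_1(a)\cup\Phi_1(b)=\Phi_2(a\cup b)$ and $\delta_2\circ\Phi_2=0$ by exactness of (\ref{eq:longexact}).

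The main obstacle is the input from arithmetic duality rather than the formal deduction above. The hard part will be to establish (or to extract from Milne's treatment of cohomology with compact support) Artin--Verdier and local Tate duality with $\mu_2$-coefficients, the identification of $\delta_2$ with the sum of local invariants, and most importantly the compatibility of these dualities with the localization sequence and cup products that underlies the projection formula $\delta_2(\Phi_1(a)\cup y)=a\cup\delta_1(y)$, together with the correct treatment of the archimedean terms via Tate cohomology. Once this duality package is in place for both number fields and function fields, the argument reduces to the purely formal manipulation given above.
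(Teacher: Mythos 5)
Your proposal is correct, and it rests on the same duality package as the paper (Artin--Verdier duality, perfectness of the sum of the local pairings, and their compatibility with the localization sequence (\ref{eq:longexact})), but the formal endgame is genuinely different. The paper identifies $\Psi = \delta_1\colon B \to C$ (where $B = \oplus_{v\in S}H^1_{et}(K_v,\mu_2)$ and $C = H^2_c(U,\mu_2)$) with the dual $\check{\Phi}$ of $\Phi$ via the identifications $\check{B}\cong B$ from local duality and $\check{A}\cong C$ from Artin--Verdier duality, deduces half-dimensionality from the rank count $\dim(\mathrm{coker}\,\Phi)=\dim(\ker\check{\Phi})=\dim(\ker\Psi)=\dim(\mathrm{im}\,\Phi)$, and then proves isotropy of $\mathrm{im}(\Phi)$ separately, using exactness of (\ref{eq:longexact}) at $r=2$ --- exactly the elementary argument you relegate to a remark. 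Your adjunction formula $\delta_2(\Phi_1(a)\cup y)=a\cup\delta_1(y)$ is precisely, unwound, the statement that $\Psi$ is dual to $\Phi$ under those identifications, so the two proofs need the same nontrivial compatibility between the dualities and the boundary maps: the paper simply asserts it (``the above pairings identify $\Psi\colon B=\check{B}\to C=\check{A}$ with the dual $\check{\Phi}$ of $\Phi$''), while you isolate it explicitly and correctly flag it as the hard input, which is indeed available in Milne's treatment of cup products on the compact-support sequence. What your route buys is that the single computation $\mathrm{im}(\Phi_1)^\perp=\ker(\delta_1)=\mathrm{im}(\Phi_1)$ yields both inclusions at once, with no dimension count and with the needed compatibility made visible; what the paper's route buys is that it never has to write the projection formula down, at the cost of splitting the argument into the half-dimensionality step and the separate isotropy step. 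Your identification of $\delta_2$ with the sum of local invariants (normalizing $H^3_c(U,\mu_2)\cong\F_2$) matches the paper's implicit use of the same fact when it invokes local class field theory for the perfectness of the pairing on $B$, so no gap arises there either.
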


Therefore, if there is an orthonormal basis $E$ for the bilinear product (\ref{eq:pairing}) such that  $\oplus_{v \in S} H^1_{et}(K_v, \mu_2)$ is Euclidean with respect to $E$, then $image(\Phi)$ becomes a self-dual code by definition. This matter is addressed by the following result, which is also proved in Section~\ref{sec:etale}.

\begin{thm}\label{thm:euclid} If $v \in S$ is complex, then $H^i_{et}(K_v,\mu_2) = 0$ for all $i \ge 1$.
Otherwise,  there is a Euclidean basis for the cup product pairing
$$H^1_{et}(K_v, \mu_2) \times H^1_{et}(K_v,\mu_2) \to H^2_{et}(K_v,\mu_2) \cong \F_2$$ if and and only if $-1$ is not a square in $K_v$.  
%Such a basis is unique in case (i), unique up to permuation in case (ii) and is unique up to the action of the orthogonal group over $\F_2$ of rank $m_v = \mathrm{dim}_{\F_2} H^1(K_v,\F_2)$ in case (iii).   If $m_v \le 3$, then the latter orthogonal group is the group of permutations of the basis.
\end{thm}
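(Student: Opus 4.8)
The plan is to reduce the assertion to the behavior of the Hilbert symbol together with a fact about symmetric bilinear forms over $\F_2$. First I would dispose of the complex case: if $v$ is complex then $K_v = \C$ is separably closed, so its absolute Galois group is trivial and $H^i_{et}(\C,\mu_2) = 0$ for all $i \ge 1$ (Galois cohomology of the trivial group).

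For the remaining real or non-archimedean places the first step is to identify the pairing cohomologically. Via the Kummer sequence at a non-archimedean $v$, and via the explicit Tate cohomology of $\Z/2$ acting trivially on $\mu_2 = \{\pm 1\}$ at a real $v$, one obtains $H^1_{et}(K_v,\mu_2) \cong K_v^\times/(K_v^\times)^2$ and $H^2_{et}(K_v,\mu_2) \cong \F_2$ through the local invariant, under which the cup product becomes the Hilbert symbol $(\cdot,\cdot)_v$. This is a nondegenerate symmetric $\F_2$-bilinear form: at finite places by the nondegeneracy of the local Hilbert symbol (local Tate duality), and at the real place because the pairing is $1$-dimensional with $(-1,-1)_\infty \ne 1$. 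All these $\F_2$-vector spaces are finite-dimensional, including at the places of residue characteristic $2$ permitted in $S$, so the argument is uniform in $v$.

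The second step is purely linear-algebraic. The key observation is that over $\F_2$ the diagonal $q(x) := \langle x,x\rangle$ is additive, since $\langle x+y,x+y\rangle = \langle x,x\rangle + \langle y,y\rangle$; hence $q$ is an $\F_2$-linear functional and the form is either alternating ($q \equiv 0$) or not. I would then establish the dichotomy that a nondegenerate symmetric bilinear form over $\F_2$ has an orthonormal (Euclidean) basis if and only if it is non-alternating, i.e.\ if and only if $\langle x,x\rangle = 1$ for some $x$. One direction is trivial. For the converse one peels off a vector $x$ with $\langle x,x\rangle = 1$, decomposes $V = \langle x\rangle \perp x^\perp$ with $x^\perp$ again nondegenerate, and inducts; the only delicate point is absorbing a possible leftover alternating part, which is handled by the $\F_2$-isometry $I_1 \perp H \cong I_3$ (with $H$ the hyperbolic plane and $I_n$ the form admitting an orthonormal basis). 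Iterating this isometry converts every hyperbolic plane into orthonormal vectors, so a non-alternating nondegenerate form is $\cong I_n$.

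The last step combines the two. The standard identity $(a,-a)_v = 1$ together with bilinearity gives $\langle x,x\rangle = (a,a)_v = (a,-1)_v$, where $a \in K_v^\times/(K_v^\times)^2$ is the class corresponding to $x$. Hence the form is non-alternating exactly when $(a,-1)_v \ne 1$ for some $a$, and by nondegeneracy of the Hilbert symbol this fails for every $a$ precisely when $-1$ lies in the radical, i.e.\ when $-1$ is a square in $K_v$. Therefore a Euclidean basis exists if and only if $-1$ is not a square in $K_v$. I expect the main obstacle to be the $\F_2$ linear-algebra dichotomy — establishing $I_1 \perp H \cong I_3$ and organizing the induction so that all alternating summands are absorbed — since the cohomological identifications and the Hilbert symbol identities are standard.
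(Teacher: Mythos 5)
Your proposal is correct and takes essentially the same route as the paper: both reduce to the linear-algebra fact that a nondegenerate symmetric form over $\F_2$ has a Euclidean basis if and only if $\langle x,x\rangle = 1$ for some $x$ (which the paper states with only a one-line induction sketch, and you flesh out via $I_1 \perp H \cong I_3$), identify the cup product with the Hilbert symbol, and use $(\alpha,\alpha)_v = (\alpha,-\alpha)_v(\alpha,-1)_v = (\alpha,-1)_v$. The only cosmetic difference is the final step, where you invoke nondegeneracy of the Hilbert symbol to detect $-1$ in the radical, while the paper computes $(\alpha,-1)_v = \sigma(\sqrt{-1})/\sqrt{-1}$ via the Artin map; these are the same class-field-theoretic fact, and both conclude that a Euclidean basis exists precisely when $-1 \notin (K_v^*)^2$.
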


\begin{rem}\label{rem:euclid}
If $v$ is not complex, then $-1$ is not a square in $K_v$ when (i) $v$ is real, or 
(ii) $v$ is non-archimedean and the order of the residue field of $v$ is congruent to $3$ mod $4$, or (iii) $v$ is
non-archimedean of even residue characteristic and $-1 \not \in (K_v^*)^2$.
\end{rem} 

\begin{cor}
\label{cor:Euclidresult}
If every non-complex place $v$ of $S$ satisfies one of conditions (i) - (iii) of remark \ref{rem:euclid}, then
the union of the Euclidean bases produced by Theorem \ref{thm:euclid} gives a Euclidean
basis for the bilinear product space $\oplus_{v \in S} H^1_{et}(K_v, \mu_2)$ which is the orthogonal sum of the $H^1_{et}(K_v,\mu_2)$.  With respect to this basis the image of $\Phi$
in Theorem \ref{thm:dual} is a self-dual code.  This is the case, in particular, if $K = \Q$ and
every odd finite place $v$ in $S$ has residue field order congruent to $3$ mod $4$.
\end{cor}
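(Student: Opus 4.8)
The plan is to reduce the statement to the local result of Theorem~\ref{thm:euclid} and to bootstrap the compatibility of the local and global pairings from the non-degeneracy already proved in Theorem~\ref{thm:dual}. First I would note that the hypothesis is precisely the input needed for Theorem~\ref{thm:euclid}: by Remark~\ref{rem:euclid}, conditions (i)--(iii) describe exactly the non-complex places $v$ at which $-1$ is not a square in $K_v$. Hence at every non-complex $v \in S$ Theorem~\ref{thm:euclid} produces a Euclidean basis $E_v$ for the local pairing $H^1_{et}(K_v,\mu_2)\times H^1_{et}(K_v,\mu_2)\to H^2_{et}(K_v,\mu_2)\cong\F_2$, while each complex $v$ contributes $H^1_{et}(K_v,\mu_2)=0$ and the empty basis. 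Put $E=\bigsqcup_{v\in S}E_v$; since $S$ is finite and each summand is finite, $W:=\oplus_{v\in S}H^1_{et}(K_v,\mu_2)$ is a finite-dimensional $\F_2$-space.

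Next I would establish that the global product (\ref{eq:pairing}) is block-diagonal for the decomposition $W=\oplus_v H^1_{et}(K_v,\mu_2)$. The underlying cup product $W\times W\to\oplus_v H^2_{et}(K_v,\mu_2)$ is computed one place at a time, as there is no natural product between classes supported at distinct places $v\neq w$; thus a class supported at $v$ and a class supported at $w\neq v$ have zero cup product, and the distinct summands are mutually orthogonal for (\ref{eq:pairing}). On a single summand the product (\ref{eq:pairing}) is $(\alpha_v,\beta_v)\mapsto\delta_2(\alpha_v\cup\beta_v)$, i.e. the local cup product pairing followed by the restriction of $\delta_2$ to $H^2_{et}(K_v,\mu_2)\cong\F_2$; since both source and target are $\F_2$, this restriction is multiplication by a scalar $c_v\in\F_2$.

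The crux, and the only non-formal step, is to show $c_v=1$ for every non-complex $v$. Rather than identify $\delta_2$ with a sum of local invariant maps, I would deduce this from Theorem~\ref{thm:dual}: for non-complex $v$ one has $H^1_{et}(K_v,\mu_2)\neq0$ (it is $K_v^*/(K_v^*)^2$ in the nonarchimedean case and $\F_2$ in the real case), so if $c_v$ were $0$ then, the cross terms already being zero, every nonzero class in that summand would be orthogonal to all of $W$, contradicting the non-degeneracy of (\ref{eq:pairing}). Hence $c_v=1$ and the restriction of (\ref{eq:pairing}) to the $v$-summand is exactly the local pairing of Theorem~\ref{thm:euclid}. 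Consequently each $E_v$ is orthonormal for (\ref{eq:pairing}) and the $E_v$ are mutually orthogonal, so $E$ is a Euclidean basis and $W$ is the orthogonal sum of the $H^1_{et}(K_v,\mu_2)$.

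It remains to read off the self-dual code. By Theorem~\ref{thm:dual} the subspace $V=\mathrm{image}(\Phi)$ satisfies $V=V^{\perp}$ in the non-degenerate space $(W,E)$, which forces $\dim V=\tfrac12\dim W$; in particular $\dim W$ is even, $E$ is an orthonormal basis, and $(W,V,E)$ is a self-dual code in the sense of the introduction. For the special case $K=\Q$ there are no complex places: the unique archimedean place is real and satisfies (i); the prime $2$, which lies in $S$, satisfies (iii) because $-1\equiv7\pmod8$ is a non-square in $\Q_2^*$; and every odd prime $p\in S$ has residue field $\F_p$ of order $p\equiv3\pmod4$, satisfying (ii). Thus every place of $S$ meets one of (i)--(iii), and the main conclusion applies.
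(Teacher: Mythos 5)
Your proof is correct, and its skeleton is the same as the paper's implicit one (the paper states the corollary as an immediate consequence of Theorems~\ref{thm:dual} and~\ref{thm:euclid}): local Euclidean bases from Theorem~\ref{thm:euclid}, vanishing at complex places, orthogonality of distinct summands, self-duality from Theorem~\ref{thm:dual}, and the check that for $K=\Q$ the real place satisfies (i), the place $2$ satisfies (iii) since $-1\equiv 7 \bmod 8$ is not in $(\Q_2^*)^2$, and odd $p\equiv 3 \bmod 4$ satisfies (ii). Where you genuinely diverge is the one non-formal step, the compatibility of the global product (\ref{eq:pairing}) with the local cup products: the paper settles this by directly identifying $\delta_2\circ\cup$ with the orthogonal sum of the local Hilbert pairings via local class field theory (this identification appears explicitly in the proof of Theorem~\ref{thm:dual}, where $B\times B\to\F_2$ is described as the sum of Hilbert symbols, each local $H^2_{et}(K_v,\mu_2)\cong\F_2$ mapping by the invariant map), whereas you avoid any appeal to the invariant maps and instead pin down the scalar $c_v\in\F_2$ by a non-degeneracy argument, using Theorem~\ref{thm:dual} purely as a black box: if $c_v=0$ for some non-complex $v$, the already-established vanishing of cross terms would make the nonzero summand $H^1_{et}(K_v,\mu_2)$ orthogonal to all of $W$, contradicting non-degeneracy. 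This is a clean and airtight bootstrap (no circularity, since you cite only the \emph{statement} of Theorem~\ref{thm:dual}), and it buys you a proof that is self-contained relative to the stated theorems; the paper's direct identification is shorter and yields more, namely the explicit description of the pairing as a sum of Hilbert symbols, which is what Section~\ref{sec:Hibcode} actually uses to compute the codes.
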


Suppose $E$ is a basis for a finite dimensional space $W$ over $\F_2$ and that $\langle \ , \ \rangle$ is the associated
Euclidean bilinear form.  Let $n = \mathrm{dim}_{\F_2}(W)$.  The orthogonal group $O(n)$ is defined to be the group of linear transformations of $W$ 
which respect $\langle \ , \ \rangle$.  % the inner product, i.e. $A\in GL(n, \F_2)$ and $\langle Ax, Ay\rangle=\langle x, y\rangle$ for all $x, y\in E$. 
The group $O(n)$ equals the group of permutations of the basis $E$ if and only
if $n \leq 3$.  Therefore, when $\mathrm{dim}(H^1_{et}(K_v,\mu_2)) \leq 3$ for all $v \in S$, the above orthonormal basis for $H^1_{et}(K_v, \mu_2)$ is unique up to permutations.  In fact, 
 $\mathrm{dim}(H^1_{et}(K_v,\mu_2)) > 3$ if and only if $K_v$ is a non trivial extension of $\Q_2$, see \cite[Proposition 5.7, Chap II]{Neukirch1999}.

%Recall that two codes inside a Euclidean space $\F_2^{2n}$  are equivalent if there is a permutation of the coordinate basis vectors for $\F_2^{2n}$ which carries one coding space to the other. Equivalent codes share many properties, e.g. they have the same weight distribution \cite{Rains1998}. However, an arbitrary change of orthonormal bases does not respect the weight distribution of self-dual codes. In fact, the orthogonal group $O(2n)$ in a Euclidean space acts transitively on the collection of self-dual codes. Therefore we have to fix a basis for the Euclidean space in order to talk about self-dual codes. If $K = \Q$ and
%every odd finite place $v$ in $S$ has residue field order congruent to $3$ mod $4$, $H^1(\Q_v, \F_2)$ is Euclidean of dimension 2. $H^1(\Q_2, \F_2)$ is Euclidean of dimension 3. Therefore different choices for the orthonormal bases for each $H^1(\Q_v, F_2)$ only differ by a permutation, thus the equivalence class of a self-dual code in the orthogonal sum $\oplus_{v\in S}H^1(\Q_v, \F_2)$ is well-defined.

Our second main result is the
following arithmetic analogue of Proposition 2 of \cite{Kreck2008}:

\begin{thm}
\label{thm:codedef}
 Up to equivalence, all self-dual codes of length
at least $4$ arise from the construction in Corollary \ref{cor:Euclidresult} when $K = \mathbb{Q}$.
In fact, each such code arises up to equivalence from infinitely many different subsets $S$
of the places of $K = \Q$.
 \end{thm}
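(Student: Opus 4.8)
The plan is to realize an arbitrary self-dual code of length $2n \ge 4$ by taking $K = \Q$ and
\[
S = \{\infty,\, 2,\, p_1, \ldots, p_{n-2}\},
\]
with $p_1, \ldots, p_{n-2}$ distinct odd primes satisfying $p_i \equiv 3 \pmod 4$. Every non-complex place of $S$ then meets Remark~\ref{rem:euclid}: $\infty$ falls under (i), each $p_i$ under (ii) (its residue field has order $p_i \equiv 3 \pmod 4$), and $2$ under (iii) because $-1 \equiv 7 \pmod 8$ is not a square in $\Q_2$. Hence Corollary~\ref{cor:Euclidresult} applies and $W = \bigoplus_{v \in S} H^1_{et}(\Q_v, \mu_2)$ is Euclidean of dimension $1 + 3 + 2(n-2) = 2n$, while Theorem~\ref{thm:dual} shows $\mathrm{image}(\Phi)$ is self-dual of length $2n$. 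Moreover each local block has dimension $\le 3$, so by the $O(n)$ remark the orthonormal basis $E$ is canonical up to permutation and the resulting code is well defined up to equivalence. It remains to choose the $p_i$ so that this code matches the target.

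Next I would make the code explicit. Kummer theory identifies $H^1_{et}(U, \mu_2)$ with $\Z[1/N]^\times/(\Z[1/N]^\times)^2$, where $N = 2 p_1 \cdots p_{n-2}$ (the contribution of $\mathrm{Pic}(U)[2]$ vanishing as $\Z[1/N]$ is a PID); an $\F_2$-basis is given by the classes of $-1, 2, p_1, \ldots, p_{n-2}$. Since $\dim H^1_{et}(U,\mu_2) = n = \tfrac12 \dim W$ and the image is self-dual, $\Phi$ is injective, so these classes yield a generator matrix for the code. I would write out each generator in the local orthonormal bases of Theorem~\ref{thm:euclid}: at $\infty$ only $\Phi(-1)$ is nonzero; at $p_j$ one may use the basis $\{[p_j], [-p_j]\}$, where the coordinates of $\Phi(-1), \Phi(2), \Phi(p_i)$ are dictated by $\left(\frac{-1}{p_j}\right) = -1$, by $\left(\frac{2}{p_j}\right)$ and $\left(\frac{p_i}{p_j}\right)$, and by the uniformizer class $[p_j]$ (which only $\Phi(p_j)$ sees); at $2$ the three coordinates are dictated by $-1,2,p_i \bmod 8$. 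Thus the generator matrix is an explicit function of the symbol data, the only genuine parameters being the residues $p_i \bmod 8 \in \{3,7\}$ and the symbols $\left(\frac{p_i}{p_j}\right)$.

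The essential constraint is quadratic reciprocity: as all $p_i \equiv 3 \pmod 4$, one has $\left(\frac{p_i}{p_j}\right) = -\left(\frac{p_j}{p_i}\right)$, so the inter-prime symbol matrix is forced to be antisymmetric, while its diagonal (the uniformizer contributions) is $1$. The strategy is then (a) to put the target code, up to the permutation equivalence allowed above, into systematic form $[\,I_n \mid A\,]$ with $A A^{T} = I_n$ by choosing an information set; (b) to solve for symbol data whose generator matrix is row-equivalent to $[\,I_n \mid A\,]$; and (c) to produce the primes by Dirichlet's theorem on primes in arithmetic progressions (equivalently Chebotarev for the compositum of the relevant quadratic fields), built up one $p_i$ at a time, realizing any prescribed residues $p_i \bmod 8$ and any prescribed free upper-triangular half $\{\left(\frac{p_i}{p_j}\right)\}_{i<j}$, the lower half being then fixed by reciprocity.

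The main obstacle is step (b): reconciling the \emph{antisymmetry} of the inter-prime symbols forced by reciprocity with the \emph{symmetric} Gram data of a self-dual code. I expect this to demand a careful combinatorial normal-form argument—assigning roles to the primes and exploiting the fact that the uniformizer direction $[p_i]$ at $p_i$ is detected only by its own generator $\Phi(p_i)$, together with the extra room afforded by the three coordinates at $2$ and the sign coordinate at $\infty$—to show that every symmetric target is nevertheless producible from antisymmetric input. Once realizability is in hand the final clause is immediate: Dirichlet's theorem supplies infinitely many primes in each admissible residue class, so every such code arises from infinitely many distinct sets $S$.
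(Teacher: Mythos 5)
There is a genuine gap, and you have located it yourself: your step (b) is exactly where the paper's work lies, and you leave it as an expectation (``I expect this to demand a careful combinatorial normal-form argument'') rather than supplying it. Your surrounding framework agrees with the paper's --- the choice $S = \{\infty, 2, p_1,\ldots,p_{n-2}\}$ with $p_i \equiv 3 \bmod 4$, the explicit generator matrix on $-1, 2, p_1,\ldots,p_{n-2}$, the observation that reciprocity forces $\left(\frac{p_i}{p_j}\right) = -\left(\frac{p_j}{p_i}\right)$, and the use of Dirichlet's theorem both to realize prescribed symbol data and to get infinitely many sets $S$. But without the normal-form argument the proof is incomplete: you have shown what shape of data a Hilbert code produces, not that every self-dual code is equivalent to one of that shape. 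Moreover, your proposed normal form, the systematic form $[\,I_n \mid A\,]$ with $AA^T = I_n$, is not adapted to the problem, because the Hilbert constraints are organized by \emph{place blocks} of widths $2, 2, \ldots, 2, 3, 1$ (the uniformizer coordinate at $p_j$ is seen only by the generator $p_j$, off-block entries of unit classes are forced to be $(00)$ or $(11)$, the $-1$ row is all ones), and a column permutation producing $[\,I_n \mid A\,]$ need not respect these blocks; passing from systematic form to the block-structured form is essentially the whole difficulty, restated.

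The paper resolves this with the boxed-matrix formalism, and the key conceptual point dissolves the ``antisymmetric versus symmetric'' tension you worry about. One views the generator matrix as an $n \times n$ matrix of $1\times 2$ blocks, defines the \emph{half-boxed} conditions (all-ones last row; last column $(10)$ except the corner; diagonal $(01)$; all other blocks identical pairs $(00)$ or $(11)$), and proves by induction on $n$ --- using row operations and column permutations only, never adding to the all-ones row --- that every self-dual code of length $2n \geq 4$ has a generator matrix equivalent to a half-boxed one: permute columns to make the top-left block $(01)$, clear the first block column to identical pairs, apply the inductive hypothesis to the complementary $(n-1)$-block self-dual code, and then clean up the first row. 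Then Lemma~\ref{lem:box} shows that half-boxedness together with self-orthogonality of the rows \emph{automatically} forces the antisymmetry condition $b_{ij} + b_{ji} = (11)$ below the diagonal --- precisely the constraint quadratic reciprocity imposes. So there is no symmetric target to reconcile with antisymmetric input: once in half-boxed form, the reciprocity pattern is a consequence of self-duality, not an additional system to solve. After that, realization is immediate, since the free parameters of a boxed matrix are exactly the below-diagonal identical pairs together with the classes at $2$ and $\infty$, all of which Dirichlet's theorem realizes by choosing $p_i$ in suitable residue classes mod $8$ and mod $p_j$ for $j < i$, and the uniqueness of the boxed completion guarantees the resulting Hilbert code coincides with the target. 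Your proposal needs this inductive normal-form argument (or a substitute of equal strength) to be a proof.
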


To conclude this introduction, we give a more explicit description of the codes produced by Corollary \ref{cor:Euclidresult}
under the hypothesis that  $\mathrm{Pic}(U)$ has odd order.  This hypothesis is simply that the ring $O_{K,S}$
of $S$-integers of $K$ has class group of odd order.  In this case, $H^1_{et}(U,\mu_2)$ is isomorphic to $O_{K,S}^*/(O_{K,S}^*)^2$.  The group $H^1_{et}(K_v,\mu_2)$ is isomorphic to $K_v^*/(K_v^*)^2$.  The pairing
$$H^1_{et}(K_v,\mu_2) \times H^1_{et}(K_v,\mu_2) \to H^2_{et}(K_v,\mu_2) \subset \F_2$$ is  the Hilbert pairing
\begin{equation}
\label{eq:hilb1}
K_v^*/(K_v^*)^2 \times K_v^*/(K_v^*)^2 \to \{\pm 1\} \cong \F_2
\end{equation} 
(see \cite[chap. XIV]{Serre1968}).   The code space
$$\Phi(O_{K,S}^*/(O_{K,S}^*)^2) \subset \oplus_{v \in S} K_v^*/(K_v^*)^2$$
is simply the subgroup  which is the diagonal image of $O_{K,S}^*$ under the
natural homomorphism induced by the inclusion of $K$ into $K_v$ for $v \in S$.
When each non-complex $v$ satisfies one of the conditions in remark \ref{rem:euclid}, $K_v^*/(K_v^*)^2$ has a Euclidean basis. 
The Euclidean structure of the vector space $\oplus_{v \in S} K_v^*/(K_v^*)^2$ comes from the orthogonal
sum of the structures from each of the Hilbert pairings (\ref{eq:hilb1}).

When $K = \Q$
and $S = \{\infty , 2, p_1, \ldots, p_n\}$ for some positive primes $p_i \equiv 3$ mod $4$,  the
group $O_{K,S}^*$ is the group $\langle -1, 2, p_1,\ldots,p_n\rangle $.   The Hilbert
pairings in (\ref{eq:hilb1}) are easily described in this case (see  Section \ref{sec:Hibcode}).
For example, when $S=\{\infty, 2, 3, 7\}$, one generates the Hamming code $e_8$. When $S=\{\infty, 2, 7, 19, 31, 131, 179, 367,$ $883, 1223, 1307, 39079\}$, one gets the Golay code $g_{24}$.

In the course of proving Theorem \ref{thm:codedef}
in Section \ref{sec:Hibcode} we give a new parametrization of self-dual codes via matrices consisting
of $1 \times 2$ blocks which have certain properties (``boxed matrices").  The Theorem is proved by showing that all
self-dual codes are equivalent to codes which have boxed descriptions.   This has consequences
to the description of unimodular lattices, in view of the connection between such lattices
and self-dual codes proved in \cite{Kitazume1991}.

It would be very interesting
to see if boxed matrix descriptions of codes are also useful in the topological context
considered by Kreck and Puppe, e.g. in trying to construct explicitly the three manifolds
giving rise to self-dual codes.  At present, the construction of these manifolds is indirect and
proceeds by showing
that certain elements of cobordism groups are trivial. It would also be interesting if one could
see the proof of Theorem \ref{thm:codedef} for $K = \mathbb{Q}$ as a kind of explicit cobordism
calculation concerning the \'etale ``surfaces" $\Spec(K_v)$ inside the \'etale ``three-manifold" $\Spec(\Z)$.

\section{Etale Cohomology over Ring of Integers}\label{sec:etale}

We will first prove Theorem~\ref{thm:dual}, whose notations we now assume.

%Since $-1$ is the unique second root of unity, we have the canonical isomorphism of sheaves $\mu_2\cong \F_2$ over $U$. As all the places with residue characteristic $2$ is contained in $S$, $2$ is invertible on $U$. 

Artin-Verdier duality (c.f. \cite[section II.3]{Milne2006}) shows that
\begin{equation}
\label{eq:duality}
H^r_{et}(U,\mu_2(-1))\times H^{3-r}_c(U, \mu_2)\to H^3_c(U, \G_m)\cong \Q/\Z
\end{equation}
is a perfect duality of $\F_2$ vector spaces. Here $\mu_2(-1):=\cH om(\mu_2, \G_m)$ is canonically isomorphic to $\mu_2$. In the following we will not distinguish between these two sheaves.

For ease of notation, denote $A = H^1_{et}(U, \mu_2)$, $B = \oplus_{v\in S} H^1_{et}(K_v, \mu_2)$ and $C = H^2_c(U, \mu_2)$.
The pairing $B \times B \to \F_2$ which is the sum of the Hilbert symbols at $v$ for $v \in S$
is a perfect pairing by local class field theory.  This identifies the dual $\check B = \mathrm{Hom}_{\F_2}(B,\F_2)$
of $B$ with $B$.  By (\ref{eq:duality}) we have perfect pairing
$A \times C \to \F_2$ which identifies $\check A$ with $C$.   From (\ref{eq:longexact}) for $r = 1$ we have
an exact sequence
$$A \arrow{\Phi}{} B \arrow{\Psi}{} C$$
Here the above pairings identify $\Psi: B = \check {B}  \to C = \check{A}$ with the dual $\check{\Phi}$ of $\Phi$.
Hence
$$\mathrm{dim}(\mathrm{coker}(\Phi)) = \mathrm{dim}(\mathrm{ker}(\check \Phi))  = \mathrm{dim}(\mathrm{ker}(\Psi)) = \mathrm{dim}(\mathrm{image}(\Phi))$$
where the last equality follows from the above exact sequence. Thus
$\mathrm{dim}(\mathrm{image}(\Phi)) = \frac{1}{2} \mathrm{dim}(B)$, so all we now must show is
that $\mathrm{image}(\Phi)$ is self annihilating.  This is true for the following reasons.  The pairing $B \times B \to \F_2$
is given by the composition of the natural cup product pairing
$$\left (\oplus_{v\in S} H^1_{et}(K_v, \mu_2) \right ) \times \left (\oplus_{v\in S} H^1_{et}(K_v, \mu_2) \right )  \to
\left (\oplus_{v\in S} H^2_{et}(K_v, \mu_2) \right ) $$
with the boundary homomorphism
$$\oplus_{v\in S} H^2_{et}(K_v, \mu_2)  \arrow{\delta_2}{} H^3_c(U,\mu_2) \cong \F_2.$$
The pairing of two elements in the image of $A \to B$ is $0$ because the cup product of
such elements in $H^2_{et}(U,\mu_2)$ has trivial image under the composition of homomorphisms
$$H^2_{et}(U,\mu_2) \to \oplus_{v\in S} H^2_{et}(K_v, \mu_2)  \arrow{\delta_2}{} H^3_c(U,\mu_2)$$
in (\ref{eq:longexact}) when $r = 2$. This completes the proof.
%\end{proof}

We now prove Theorem \ref{thm:euclid}. A non-trivial vector space $W$ over $\F_2$ equipped with any non-degenerate bilinear product $\langle \ , \ \rangle$ has a Euclidean basis if and only if there is an $x \in W$ such that $\langle x, x\rangle=1$ is nontrivial in $\F_2$. This is easy to prove when $W$ has dimension less than or equal to $3$, and the general case follows by induction on dimension. 

Suppose now that $v$ is not a complex place.  The pairing
$$H^1_{et}(K_v, \mu_2) \times H^1_{et}(K_v,\mu_2) \to H^2_{et}(K_v,\mu_2) \cong \F_2$$
is the Hilbert pairing
$$(\ , \ )_v: K_v^*/(K_v^*)^2 \times K_v^*/(K_v^*)^2 \to \{\pm 1\} \cong \F_2.$$

This pairing has a Euclidean basis if and only if there is an element $\alpha \in K_v^*/(K_v^*)^2$
such that $(\alpha,\alpha)_v = -1 \in \pm 1$. Here $(\alpha,\alpha)_v =
(\alpha,-\alpha)_v \cdot (\alpha,-1)_v = (\alpha,-1)_v$.  By the definition of
the Hilbert pairing, $$(\alpha,-1)_v = \sigma(\sqrt{-1})/\sqrt{-1}$$
where $\sigma \in \mathrm{Gal}(K_v(\sqrt{-1})/K_v) = G$ is the image of $\alpha$
under the Artin map $\sigma: K_v^* \to  G$.  Hence there exists $\alpha$ with $(\alpha,-1)_v  = -1$ if and only
if $K_v(\sqrt{-1})$ is a non-trivial extension of $K_v$. This completes the proof of Theorem~\ref{thm:euclid}.

To conclude this section we make a few comments concerning the comparison of
the above construction with that of Proposition 2  of \cite{Kreck2008}.  The
code space considered by Kreck and Puppe is the image of the natural homomorphism
$$H^1(W,\F_2) \to H^1(\partial W,\F_2) = \oplus_{i = 1}^{2n} H^1(\R P^2,\F_2)$$
in which $W$ is a three-manifold with boundary  $\partial W$ the disjoint union
of $2n$ copies of $\R P^2$.  Each $\R P^2$ is the boundary of a three-orbifold
which is the quotient of a three dimensional ball $B^3$ by the antipodal involution which fixes
the center of $B^3$.   In the arithmetic context, the role of $\R P^2$ is played
by $\Spec(K_v)$, which has \'etale cohomological dimension $2$ when $v$ is
finite. The fixed loci of the involution should be compared to the spectrum of the residue field $\Spec(k(v))$. However, when $k(v)$ is the residue field of a finite place, $\Spec(k(v))$ has \'etale
cohomological dimension $1$ rather than $0$. So a better topological counterpart % of the arithmetic construction of self-dual codes
would make $\partial W$ a  finite disjoint union of connected smooth surfaces $S_i$, each of which
is the boundary of a three orbifold which is the quotient of a neighborhood
of a circle by an involution which fixes the circle. Klein bottles and two-dimensional tori could be realized as boundaries of such three orbifolds. When $S_i$ is a Klein bottle, $H^1(S_i,\F_2)$ is two-dimensional and the cup product
$H^1(S_i,\F_2) \times H^1(S_i,\F_2) \to H^2(S_i,\F_2) \cong \F_2$
has a Euclidean structure. Thus  a Klein bottle is analogous to $Spec(K_v)$ when $\# k(v) \equiv 3$ mod $4$. A two-dimensional torus is analogous to $Spec(K_v)$ when $\# k(v) \equiv 1$ mod $4$, since in this case the cup product pairing
on $H^1$ does not have a Euclidean structure.

% In order to make $S_i$ correspond to an odd finite place $v$ with $\# k(v) \equiv 3$ mod $4$, we would like $H^1(S_i,\F_2)$ to be two-dimensional and the cup product $H^1(S_i,\F_2) \times H^1(S_i,\F_2) \to H^2(S_i,\F_2) \cong \F_2$ to have a Euclidean structure. Thus $S_i$ should be a Klein bottle, and if $\# k(v) \equiv 3$ mod $4$ then $\Spec(K_v)$ is analogous to such a Klein bottle. If  $\# k(v) \equiv 1$ mod $4$ then $\Spec(K_v)$ is analogous to a two-dimensional torus, since in this case the cup product pairing on $H^1$ does not have a Euclidean structure.

%One can view $\Spec(K_v)$ as the two dimensional boundary of a three dimensional neighbourhood of $\Spec(k(v))$.

\section{Hilbert Symbol Codes over $\mathbb{Q}$}
\label{sec:Hibcode}

Let $S$ be a finite set of places of $\mathbb{Q}$ consisting of the infinite place $\infty$,
the place determined by the prime $2$, and the places determined by a finite
set $p_1,\ldots,p_{n-2}$ of distinct positive prime numbers which are congruent to $3$ mod $4$.
The $S$-units $\mathbb{Z}_S^*$ of $\mathbb{Z}$ are then the subgroup $\langle -1, 2, p_1,\ldots,p_{n-2}\rangle $ of $\mathbb{Q}^*$
generated by $-1, 2, p_1,\ldots,p_{n-2}$. Recall that for each place $v_p \in S$
we have a Hilbert symbol pairing
$$(\ , \ )_{v_p}:\Q_{p}^*/(\Q_{p}^*)^2 \times \Q_{p}^*/(\Q_{p}^*)^2 \to \F_2.$$
Write the $\F_2$ vector space $W_p$ additively for the multiplicative group $\Q_{p}^* / (\Q_{p}^*)^2$. The space $W = \oplus_{v_p \in S} W_p$ is a finite dimensional vector space
over $\F_2$, and we have a non-degenerate pairing $(\ , \ ):W \times W \to \F_2$ defined
by $(\ ,\  ) = \sum_{v_p \in S}(\ , \ )_{v_p}$. Now we specify an explicit basis for each $W_p$ with respect to which the pairing $W\times W\to \F_2$ is Euclidean.

For an odd prime $p$ congruent to $3$ mod $4$, $-1$ is a non-square in $\Q^*_p$. We choose the representatives $\{-p, p\}$ in $\Q^*_p/(\Q^*_p)^2$ for the $\F_2$ basis for $W_p$.  For $\Q_2^*/(\Q_2^*)^2$ we use the representatives $\{-2, -10, -5\}$. For $\R^*/(\R^*)^2$ we use $-1$. It is an easy calculation to show that under this basis the Hilbert symbol pairing on $W$ is Euclidean, cf.  \cite[p. 23]{Serre1973}. 

Note that when $p$ is odd, a rational integer $l$ which is prime to $p$ is not a square in $\Q^*_p$ if and only if $l$ is a non-square mod $p$, and in this case the vector in $W_p$ corresponding to $l$ is $(1,1)$. If $l$ is a square in $\Q^*_p$, then the corresponding vector is $(0,0)$.

The image of $\Phi(\Z_S^*)$ in $W$ gives us a generator matrix $M$ of a linear code $V$ in $W$
which has the form indicated in Table \ref{tab:code}.  In this table 
there are three entries under $W_2$ because $\Q_2^*/(\Q_2^*)^2$ is a three
dimensional vector space over $\F_2$. The entries for a given
row under $W_2$ are the coefficients of the corresponding generator of $\Z_S^*$
in $\Q_2^*/(\Q_2^*)^2$ relative to the ordered basis $\{-2, -10,-5\}$. Under the chosen basis,

%\begin{figure}[h]
%\centering
%\begin{minipage}{0.8\textwidth}
%\begin{verbatim}
\begin{table}[ht]
\caption{A boxed code}
\label{tab:code}
\centering
%\begin{tabsize}
\begin{tabular}{|c|llllll|}
\hline
 \multirow{2}{*}{\backslashbox{$S$-units}{places}}   & $W_{p_1}$     & $W_{p_2}$     & $\cdots$ & \multicolumn{2}{l}{$W_2$} & $W_{\R}$ \\
    & $\{-p_1, p_1\}$ & $\{-p_2, p_2\}$ & $\cdots$ & \multicolumn{2}{l}{$\{-2, -10, -5\}$} & $\{-1\}$ \\ \hline
 $p_1$ & $01$ & $00/11$%\footnotemark[1]
 &    & $00/11$%\footnotemark[3]
 & $1$ & $0$ \\
 $p_2$ & $11/00$%\footnotemark[2]
 & $01$ &    &   & $1$ & $0$ \\
 $\vdots$ & $\vdots$ &  & $\ddots$ &  & $\vdots$ & $\vdots$ \\
 $2$  &  $11/00$%\footnotemark[4]
 &  &  & $01$ & $1$ & $0$ \\
 $-1$ &  $11$ & $11$ & $\cdots$ & $11$ & $1$  & $1$ \\
 \hline
\end{tabular}
%\end{tabsize}
\end{table}
Theorem~\ref{thm:dual} guarantees that $V$ is a self-dual code in $W$. This can also be seen more directly in this special case by observing from Table~\ref{tab:code} that $M$ has rank $n$ and that $V$ is self annihilating by quadratic reciprocity.

We will view the $n\times 2n$ binary matrix $M$ in Table~\ref{tab:code} as an $n\times n$ block matrix $\tilde{M}$, where each block is a pair of elements $(a_{2i}, a_{2i+1})$. The matrix $\tilde{M}$
has the following properties:

\begin{flushleft}
\begin{itemize}
\item[(1)] The bottom row of $\tilde M$ has all entries equal to $(11)$.
\item[(2)] All entries
of the last column of $\tilde{M}$ equal the $(10)$ pair except for the $(11)$ in the final row.
\item[(3)] The diagonal elements of $\tilde{M}$ are all $(01)$ except for the final diagonal entry, which is equal to  $(11)$.
\item[(4)] All other pairs in $\tilde{M}$ are either $(00)$ or $(11)$, which we will call \emph{identical pairs}.
\end{itemize}
\end{flushleft}

We say that a block matrix having properties (1) - (4) is \emph{half-boxed}.  We will say that
$\tilde{M}$ is \emph{boxed} if the following is also true:
\begin{enumerate}
\item[(5)] For all $n-1 \ge i>j \ge 1$, $b_{ij} + b_{ji} = (11)$.
\end{enumerate}

It follows from quadratic reciprocity that a Hilbert code $V$ gives a boxed generator matrix $\tilde M$.
On the other hand, we can view the generator matrix of an arbitrary self-dual code $V'$ as an $n \times n$
matrix $\tilde M'$ whose entries are $1 \times 2$ blocks. The following lemma is an easy observation:

\begin{lem}\label{lem:box}
If $\tilde M'$ is half-boxed, and its row vectors are orthogonal to each other, then condition (5) is automatically satisfied, i.e. $\tilde M'$
is boxed.
\end{lem}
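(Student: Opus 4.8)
The plan is to read condition (5) off directly from the pairwise orthogonality of the rows, once one observes that almost every block contributes nothing to the relevant inner products. Fix indices $i \neq k$ with $1 \le i, k \le n-1$; the goal is to extract information about the two transposed blocks $b_{ik}$ and $b_{ki}$ from the vanishing of $\langle \mathrm{row}_i, \mathrm{row}_k \rangle$. Writing the Euclidean pairing of two $1\times 2$ blocks as $\langle (a,b),(c,d)\rangle = ac+bd \in \F_2$, the assumed orthogonality of rows $i$ and $k$ reads
\[ \sum_{l=1}^{n} \langle b_{il}, b_{kl}\rangle = 0. \]

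The first key observation I would use is that an identical pair pairs trivially with any identical pair, since $\langle (x,x),(y,y)\rangle = xy+xy = 0$ in $\F_2$. By half-boxedness, every block $b_{il}$ with $l \notin \{i, n\}$ and every block $b_{kl}$ with $l \notin \{k,n\}$ is an identical pair (property (4)), while the last-column blocks are $(10)$ (property (2)) and the diagonal blocks are $(01)$ (property (3)). Hence in the sum above every column $l \notin \{i,k,n\}$ contributes $0$, and only the three columns $l \in \{i,k,n\}$ survive.

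I would then compute these three surviving terms. At $l=n$ both entries are $(10)$, so $\langle (10),(10)\rangle = 1$. At $l = i$ the term is $\langle b_{ii}, b_{ki}\rangle = \langle (01), b_{ki}\rangle$, which equals $1$ precisely when $b_{ki} = (11)$ and $0$ when $b_{ki} = (00)$; symmetrically, at $l = k$ the term $\langle b_{ik}, b_{kk}\rangle = \langle b_{ik}, (01)\rangle$ equals $1$ precisely when $b_{ik} = (11)$. Substituting, orthogonality becomes
\[ 1 + [\,b_{ki} = (11)\,] + [\,b_{ik} = (11)\,] = 0 \in \F_2, \]
so exactly one of $b_{ik}, b_{ki}$ equals $(11)$. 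Since both are identical pairs, this is precisely the statement $b_{ik} + b_{ki} = (11)$, i.e.\ condition (5) for the pair $(i,k)$; letting $i > j$ range as in (5) finishes the argument.

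There is no genuine obstacle here: the content is entirely bookkeeping, and the whole computation collapses because identical pairs annihilate one another under the block pairing, leaving only the columns $l=n$, $l=i$, $l=k$. The one point to execute with care is the column-by-column classification, making sure the diagonal block $b_{ii}=(01)$, the last-column block $b_{in}=(10)$, and the identical off-diagonal blocks are not conflated, and that orthogonality is invoked only for rows with indices $\le n-1$, which is all that condition (5) requires. I would also note that self-orthogonality of the individual rows plays no role in this step.
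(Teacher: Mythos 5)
Your proof is correct, and it is exactly the computation the paper has in mind: the paper states Lemma~\ref{lem:box} as ``an easy observation'' with no written proof, and your column-by-column bookkeeping—identical pairs annihilating each other, leaving only the contributions $\langle(10),(10)\rangle = 1$ at column $n$ and $\langle(01),b\rangle$ at the two diagonal columns—is the intended verification. Your closing remarks (that only rows with indices $\le n-1$ are used, and that self-orthogonality of individual rows is not needed) are also accurate.
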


\begin{proof}[Proof of Theorem~\ref{thm:codedef}]

Every vector in a self-dual code $V$ must have even weight, i.e. an even number of $1$'s, since
every vector has trivial product with itself.  It
follows that $V$ must contain the  vector $m_1$ having all entries equal to $1$,
since this vector is orthogonal to all vectors of even weight.  Suppose now that
$M$ is the generator matrix for a self-dual code $V$ of length $2n$ and that the last
row of $M$ is $m_1$.  Observe that elementary row operations to $M$ correspond to a change of basis for the code $V$. Column permutations send $M$ to a generator matrix for a code equivalent to $V$. We will show by induction on $n$ that after applying a sequence
of invertible linear row operations and permutations of columns to $M$, one can make
the associated block matrix $\tilde{M}$ half-boxed.  We will in fact show that we can do
this without ever adding another row to the final row $m_1$ of $M$.  This will prove
the theorem, since the above operations lead to codes equivalent to $V$ by definition.

For $n = 2$ our claim is obvious.  We now suppose that $n > 2$ and that $M$ is the generator matrix for a self-dual code $V$ of length $2n$ and that the last
row of $M$ is $m_1$.  As $rank(M)=n$, the first row of $M$ is neither all-zero $00\cdots 0$ nor all-one $11\cdots 1$. Therefore we can permute the columns of $M$ to make the pair on the upper-left corner of $\tilde{M}$ equal $(01)$. We view $\tilde{M}$ as having four blocks:

\begin{table}[ht]
\caption{Block form of $\tilde{M}$}
\label{tab:block}
\centering
%\begin{tabsize}
\begin{tabular}{|l|l|}
\hline
 $01$ & $u$ \\
 \hline
 $w$  & $M'$\\
 \hline
\end{tabular}
%\end{tabsize}
\end{table}
\noindent Here $w$ is a column block-vector of length $n-1$, and $u$ is a row block-vector of the same length. By adding the first row of $\tilde{M}$ to the $j$-th row if necessary, $2\leq j<n$, we can assume that $w$ consists only of identical pairs. Now $M'$ represents the generator matrix of a self-dual code of length $2n-2$ which has all $1$'s
in the final row. By our induction hypothesis, we can do column permutations and row operations on $M'$ such that $\tilde{M'}$ is in half-boxed form, where the bottom row of $M'$ remains all $1$'s. We perform these same operations
on the original matrix $M$.  This leads to a column block-vector $w$ in $\tilde {M}$ which still consists of identical pairs,
and the bottom row of $\tilde{M}$  remains $m_1$.

Consider the first $(n-2)$ pairs in $u$.  We have now arranged that the diagonal entries of $\tilde{M'}$
are all of the form $(01)$ except for the diagonal entry in the bottom row, and the entries of $\tilde{M'}$
which are not in the last row or column are identical pairs.  Therefore we can add to the first row of $\tilde{M}$
rows numbered $2$ through $n-1$ in such a way that all block entries of $u$ except for the last block become
identical pairs.   After these operations the upper-left corner of $M$ is either $01$ or $10$, since these operations amounts to adding certain identical pairs in $w$ to $01$.  Since the weight of the first row is even, the last pair of $u$ should be either $01$ or $10$. By adding the bottom row to the first row if necessary, we can assume the last pair of $u$ is $10$. Finally, if the upper-left corner of $M$ is $10$, we permute the first two columns of $M$ to make it $01$. Now the associated block matrix $\tilde{M}$ is in half-boxed form.  Therefore $\tilde{M}$ is in fact boxed by lemma \ref{lem:box}.

To complete the proof, we now need to show that every boxed matrix $\tilde{M}$
can be realized by the Hilbert code associated to some set $S = \{2,\infty,p_1,\ldots,p_{n-2}\}$.
To specify the $p_i$ we begin by requiring their classes in $\Q_2^*/(\Q_2^*)^2 \times \R^*/(\R^*)^2$ to be as in
the last two block columns of $\tilde{M}$.  This can be done with $p_i$ congruent to $3$ mod $4$.
We now choose the $p_i$ to lie in residue classes mod $p_j$ for $1 \le j < i \le n-2$ such
that the class of $p_i$ in $\Q_{p_j}^*/(\Q_{p_j}^*)^2$ is given by the entry $b_{ij}$ of $\tilde{M}$.
Since there is a unique boxed matrix which has these entries, and the block matrices
associated to Hilbert codes are boxed, we have now realized $\tilde{M}$ by a Hilbert code.
By the equidistribution of prime numbers in congruence classes, each self-dual code can be realized by this construction with infinite many distinct sets of places $S$.
\end{proof}

\begin{rem}
Suppose we specify arbitrary identical pairs for the entries $b_{ij}$ in a block matrix $\tilde{M}$
as $i$ and $j$ range over pairs for which $1 \le i < j \le n-1$.  Then $\tilde{M}$ can be completed
in a unique way to a boxed matrix.  This gives a new non-recursive way of writing down self-dual codes
of a given length. For some known recursive algorithms see \cite{Bilous2002} and \cite{Bouyuklieva2011}.
\end{rem}

\begin{rem}
Consider the case $K=\F_q(T)$, where $q$ is a prime power and $q \equiv 3$ mod $4$, $T$ is a parameter. $X=\mathbb{P}^1_{\F_q}$. $S=\{\frac{1}{T}, g_1(T),\cdots, g_{n-1}(T)\}$ where each $g_i(T)$ is a monic irreducible polynomial in $\F_q[T]$ of odd degree. Denote $W:=\oplus_{v\in S} K_v^*/(K_v^*)^2$. Then upon a suitable choice of basis, the Hilbert symbol pairing $W\times W\to \F_2$ is also Euclidean, and the diagonal image of $\Phi: \cO^*_U/(\cO^*_U)^2=\langle -1, g_1(T), \cdots, g_{n-1}(T)\rangle$ in $W$ is also given by a boxed matrix.
\end{rem}

\bibliographystyle{alpha}
\bibliography{ref3}

\end{document}